\documentclass[12pt, reqon]{amsart}

\usepackage{cite}

\usepackage{color}
 \usepackage{amscd}

\usepackage{amsrefs}

 \usepackage{amssymb, amsmath}

 \input xy
  \xyoption{all}
 

 \input diagxy








\usepackage{epsfig}

    \newtheorem{theorem}{Theorem}[section]
\newtheorem{prop}{Proposition}[section]
\newtheorem{lemma}[theorem]{Lemma}
\newtheorem{cor}[theorem]{Corollary}
  \numberwithin{equation}{section}
 \numberwithin{prop}{section}
\newtheorem{exmp}{Example}[section]





\newcommand{\Beq}{\begin{equation}}
\newcommand{\Eeq}{\end{equation}}
\newcommand{\Beqr}{\begin{eqnarray}}
\newcommand{\Eeqr}{\end{eqnarray}}

\newcommand{\rmc}{{\mathbf {Cat}}}

\newcommand{{\wtlg}}{\widetilde\gamma }

\newcommand{{\wtlG}}{\widetilde\Gamma }

\newcommand{{\wtlv}}{\widetilde v }

\newcommand{{\tlg}}{\tilde\gamma }

\newcommand{{\tlG}}{\tilde\Gamma }

 \newcommand{\mbl}{{{\mathbf \Theta}}}

 \newcommand{\mbeta}{{{\mathbf \Psi}}}




\newcommand{\Obj}{{\rm Obj }}
\newcommand{\Mor}{{\rm Mor }}


\begin{document}
\title{Categorified presheaves and sieves}

\author{Saikat Chatterjee }
\address{School of Mathematics, Indian Institute of Science Education and Research\\
CET Campus\\ Thiruvananthapuram, Kerala-695016\\
India}
\email{saikat.chat01@gmail.com}

\keywords{Presheaves; Sieves (category theory); Yoneda embedding}
\subjclass[2010]{Primary 18F20;  Secondary:18F99 }


\def\xypic{\hbox{\rm\Xy-pic}}

\begin{abstract}
 Let $\mathcal C$ be a category of  a set of  (small) categories.  This paper concerns with the ${\mathbf {Cat}}$-valued presheaves and sieves  over category $\mathcal C.$ Since ${\mathbf {Cat}}$ is not a concrete category, existing definition of presheaves can not deal with the situation. This paper proposes a new framework for the purpose.   The main result is a version of  Yoneda embedding for  ${\mathbf  {Cat}}$-valued presheaves, and construction of the ${\mathbf {Cat}}$-valued sieves over the category ${\mathcal O}(\mathbf B)$ of subcategories of a given topological category $\mathbf B.$

 \end{abstract}



\maketitle

\section{Introduction}\label{s:int}
The objective of this paper is to propose a framework for $\mathbf {Cat}$-\textit{valued presheaves and sieves}. This paper can be considered as the first part of a sequel. The second part [13] contains the discussions on ${\mathbf {Cat}}$-\textit{valued sheaves}. On one hand, this paper provides the necessary mathematical accessories required for the second part.  On the other, it lays the foundation for a paper  under preparation (jointly with  A Lahiri, and A N Sengupta), which extends the construction of sieves in this paper to \textit{Grothendieck topologies} on a ``\textit{topological category}''.

Recall a   presheaf, for a given category $\mathbf C,$  is a contravariant functor [6,15,17]
\begin{equation}\label{E:introtradpre}
	R:{\mathbf C}^{\rm op}\longrightarrow {\mathbf {Set}},
\end{equation}
where ${\mathbf {Set}}$ is the (locally small) category of small sets. ${\mathbf C}$ is often chosen to be the category ${\widetilde {\mathcal O}}(B)$ of open subsets of a topological space $B$, that is, 
\begin{equation}\label{E:introtildeo}
\begin{split}
&\Obj\biggl({\widetilde {\mathcal O}}(B)\biggr) := \{U|\hskip 0.15 cm U\subset B\},\\
&{\rm Hom}(U, V) := \{f:U\to V|\hskip 0.15 cm U, V\subset B\}.
\end{split}
\end{equation}
In \eqref{E:introtradpre} instead of ${\mathbf {Set}},$ we may take  any other \textit{concrete category}. For instance by taking category of (small) groups ${\mathbf {Grp}}$, category of (small) vector spaces ${\mathbf {Vect}}$ or category of (small) rings ${\mathbf {Ring}}$ as the codomain in  \eqref{E:introtradpre}, and   we respectively get \textit{presheaf of groups, presheaf of vector spaces} or \textit{presheaf of rings}.

Now suppose instead of a topological space $B,$ we are concerned with a category $\mathbf B,$ whose both object and morphism spaces are topological spaces, namely $\mathbf B$ is a  \textit{topological category}. Note that this taxonomy of topological category is not standard in literature. Ours is consistent with [7]. Natural object of interest in that case would be the category ${\widetilde {\mathcal O}}(\mathbf B)$ of (``open") subcategories  \footnote{In this paper we will not discuss \textit{open subcategories}. In [13] it has been defined. It would serve the purpose of this paper, if we simply  think of ${\widetilde {\mathcal O}}(\mathbf B)$ as a category of subcategories of a given category $\mathbf B.$}  of $\mathbf B.$ The question  we may ask is, then what should be a ``natural framework  for presheaves" in this context. Of course, one may still work with the definition in  \eqref{E:introtradpre}. But, more natural choice would be to consider the category ${\mathbf {Cat}}$ of small categories as the codomain  of a presheaf in this context, rather than ${\mathbf {Set}}$ (or any other concrete category). Since ${\mathbf {Cat}}$ is not a concrete category, we can not proceed with the definition of presheaf given in \eqref{E:introtradpre}  and we need a new framework. In this paper we propose such a framework, and develop  the corresponding theory of ${\mathbf {Cat}}$-\textit{valued sieves} 

Let $\mathcal C$ be a category of a collection of small categories. We work with a ${\mathbf {Cat}}$-\textit{valued presheaf} over $\mathcal C$ given by a contravariant functor:
\begin{equation}\label{E:introtradprecat}
{\mathcal R}: {\mathcal C}^{\rm op}\longrightarrow {\mathbf {Cat}}.
\end{equation}
In particular, our future goal is to consider the case ${\mathcal C}={\widetilde {\mathcal O}}({\mathbb P}M),$ and study the Grothendieck topologies [1, 2, 20] on the \textit{path space groupoid}  ${\mathbb P}M$ of a given smooth manifold $M$; that is, a category  ${\mathbb P}M$, whose object space is the manifold $M$ and morphisms are certain equivalence classes of smooth paths [5, 8, 9, 11, 24]. Usual compact-open topology defines a topology on $\Mor({\mathbb P}M).$ The path space groupoid over a smooth manifold plays a pivotal role in higher gauge theories [4, 16, 21--23]. The ``locally defined subcategories" of ${\mathbb P}M$ also appeared in the context of local structures of categorical principal bundles [8, 12]. 
 
 \subsection*{Notation}
 We work with following set of notation. For $\mathbf C$ and $\mathbf D,$  a given pair of categories.
\begin{equation}\label{E:notfunc}
	{\rm Fun}(\mathbf C, \mathbf D)
\end{equation}
will denote the set of all functors from $\mathbf C$ to $\mathbf D.$ For functors  from  $\mathbf C$ to $\mathbf D,$       
\begin{equation}\label{E:notnat}
	{\mathcal N}(\mathbf C, \mathbf D)
\end{equation}
will denote the set of all natural transformations.

If ${\mathbf \theta}_1, {\mathbf \theta}_2:\mathbf C\to \mathbf D$ are a pair of functors, then 
\begin{equation}\label{E:notnatspfunc}
	{\rm Nat}(\mathbf \theta_1, \theta_2)
\end{equation}
is the set of  natural transformations between $\theta_1$ and $\theta_2.$  We denote a natural transformation $\Phi$ from a functor $\theta_1$ to another functor $\theta_2$ as
\begin{equation}
\Phi:\theta_1\Longrightarrow \theta_2.
\end{equation}
Let
\begin{equation}\label{E:notfunccat}
	{\mathcal F}(\mathbf C, \mathbf D);
\end{equation}
be the category of functors;
that is, 
\begin{equation}\label{E:notfunccatobjmor}
	\begin{split}
		&\Obj\biggl({\mathcal F}(\mathbf C, \mathbf D)\biggr)={\rm Fun}(\mathbf C, \mathbf D)\\
		&\Mor\biggl({\mathcal F}(\mathbf C, \mathbf D)\biggr)={\mathcal N}(\mathbf C, \mathbf D).
\end{split}
\end{equation}

Given a morphism $f$ in some category, $s(f), t(f)$ will respectively denote the source of $f$ and target of $f;$ that is,
$${s(f)}\xrightarrow{f}{t(f)}.$$

$\mathbf {\emptyset}$ will be  the \textit{empty category}; i.e. a category whose object and morphism sets are empty sets.

  \subsection*{Summary of the paper}    
  
We start with a category of a collection of small categories ${\mathcal C}.$  We show that there exists a (contravariant) functor ${\mathcal F}_{\mathbf U}:{\mathcal C}^{\rm op}\longrightarrow {\mathbf {Cat}}$ corresponding to each ${\mathbf U}\in \Obj(\mathcal C),$ analogous to a ${\rm Hom}$ functor in set theoretic set-up. We show, in Proposition~\ref{Pr:yoneda}, that functors ${\mathcal F}_{\mathbf U}$  ``partially" fulfill Yoneda lemma.

We define a $\mathbf {Cat}$-valued presheaf on ${\mathcal C}$ to be a contravariant functor
$${\mathcal C}^{\rm op}\longrightarrow {\mathbf {Cat}}.$$
The main result of this paper is  Theorem~\ref{Th:yonembed}. In Theorem~\ref{Th:yonembed} we show that classical Yoneda embedding is still valid in this framework; that is, we can realize any category $\mathcal C$ as above, as a full subcategory of category of its  ${\mathbf {Cat}}$-valued presheaves. We move onto define $\mathbf {Cat}$-valued sieves over ${\mathcal C}.$ In Example~\ref{Ex:sie} we construct ${\mathbf {Cat}}$-valued sieves over the category ${\mathcal O}(\mathbf B)$ (defined in \eqref{E:ob}).

\section{Functors to $\mathbf {Cat}$}\label{S:mbsieve}

Let $\mathcal C$ be a category of a collection of (small)categories; that is objects are a set of (small)categories and morphisms are functors between them. In particular, we will be interested in category ${\widetilde {\mathcal O}}(\mathbf B),$ where $\mathbf B$ is a given category and, 
\begin{equation}\label{E:genob}
	\begin{split}
		&{\rm Obj}\biggl({\widetilde {\mathcal O}}({\mathbf B})\biggr):=\{{\mathbf U}| {\mathbf U}\subset {\mathbf B}\}=\hbox{set of all subcategories of}\hskip 0.2 cm {\mathbf B},\\
	 &{\rm Hom}({\mathbf U}, {\mathbf V})=\{{\mbl}:{\mathbf U}\longrightarrow {\mathbf V}|{\mathbf U}, {\mathbf V}\subset {\mathbf B}\}.	
\end{split}
\end{equation}

We will also work with the category ${{\mathcal O}}(\mathbf B),$ whose objects are same as those of ${\widetilde {\mathcal O}}(\mathbf B);$ but, only morphism  between any two subcategories (objects) is the  inclusion functor, if one is  subcategory  of the other. Otherwise no morphism exists:

\begin{equation}\label{E:ob}
	\begin{split}
		&{\rm Obj}\biggl({\mathcal O}({\mathbf B})\biggr):=\{{\mathbf U}| {\mathbf U}\subset {\mathbf B}\}=\hbox{set of all subcategories of}\hskip 0.2 cm {\mathbf B},\\
	 &{\rm Hom}({\mathbf U}, {\mathbf V})=\{{\mathbf i}:{\mathbf U}\hookrightarrow {\mathbf V}|{\mathbf U}\subset {\mathbf V}\subset {\mathbf B}\},\\ 
		&\hskip 02.4 cm \hbox{where}\hskip 0.2cm {\mathbf i}\hskip 0.2 cm \hbox{is the inclusion functor, and}\\
	 &{\rm Hom}({\mathbf U}, {\mathbf V})=\emptyset, \hbox {if}\hskip 0.2 cm {\mathbf U}\not\subset {\mathbf V}.
\end{split}
\end{equation}

Let $\rmc$ be the category of all (small) categories. We define the following contravariant  functor, which will play the role of $\rm Hom$-functor in set theoretic framework. We define  ${\mathcal F}_{\mathbf U}:{\mathcal C}^{\rm op}\to \rmc,$ corresponding to each $\mathbf U\in {\rm Obj}(\mathcal C),$ to be
\begin{eqnarray}
	&{\mathcal F}_{\mathbf U}:&{\rm Obj}(\mathcal C)\to {\rm Obj}(\rmc)\nonumber\\
	       && \mathbf V\mapsto {\mathcal F}(\mathbf V, \mathbf U)\label{E:objfun}\\
	&{\mathcal F}_{\mathbf U}:&{\rm Mor}(\mathcal C)\to {\rm Mor}(\rmc)\nonumber\\
 &&\biggl({\mathbf V}\xrightarrow{\mbl} {\mathbf W}\biggr)\mapsto \biggl({\mathcal F}(\mathbf W, \mathbf U)\xrightarrow{{\mathcal F}_{\mathbf U}(\mbl)}{\mathcal F}(\mathbf V, \mathbf U)\biggr),\label{E:morfun}
\end{eqnarray}

where $\mathbf \Theta$ is a functor from the category $\mathbf V$ to $\mathbf W.$ \eqref{E:morfun} definitely requires an explanation. Let us verify that indeed we have such a functor. Since $\rmc$ is the category of categories, \eqref{E:objfun} does make sense. Now suppose $\mathbf V, \mathbf W\in {\rm Obj}(\mathcal C),$ and we are given a functor $$\mbl:\mathbf V\to \mathbf W.$$
Under the action of ${\mathcal F}_{\mathbf U}$, $\mathbf V, \mathbf W$ respectively mapped to the categories ${\mathcal F}(\mathbf V, \mathbf U)$ and ${\mathcal F}(\mathbf W, \mathbf U).$ We have to show that $\mbl$ defines a functor
\begin{eqnarray}
	&{\mathcal F}_{\mathbf U}(\mbl)&:{\mathcal F}(\mathbf W, \mathbf U)\longrightarrow {\mathcal F}(\mathbf V, \mathbf U),\nonumber\\
 &{\mathcal F}_{\mathbf U}(\mbl):&\Obj\biggl({\mathcal F}(\mathbf W, \mathbf U)\biggr)\longrightarrow \Obj\biggl({\mathcal F}(\mathbf V, \mathbf U)\biggr),\nonumber\\
&&{\rm Fun}(\mathbf W, \mathbf U)\longrightarrow {\rm Fun}(\mathbf V, \mathbf U)\label{E:funcfuncobj}\\
&{\mathcal F}_{\mathbf U}(\mbl):&\Mor\biggl({\mathcal F}(\mathbf W, \mathbf U)\biggr)\longrightarrow \Mor\biggl({\mathcal F}(\mathbf V, \mathbf U)\biggr),\nonumber\\
&&{\mathcal N}(\mathbf W, \mathbf U)\longrightarrow {\mathcal N}(\mathbf V, \mathbf U)\label{E:funcfuncmor}.
\end{eqnarray}
For any $\mbeta\in \Obj\biggl({\mathcal F}(\mathbf W, \mathbf U)\biggr)={\rm Fun}(\mathbf W, \mathbf U),$ by composition with the functor $\mbl:\mathbf V\to \mathbf W,$ we get a functor ${\mathbf \Psi}\mbl:\mathbf V\to \mathbf U;$ that is ${\mathbf \Psi} \mbl\in {\rm Fun}(\mathbf V, \mathbf U)=\Obj\biggl({\mathcal F}(\mathbf V, \mathbf U)\biggr).$ So, we have the map 
\begin{equation}\label{D:objfunc}
	\begin{split}
		{\mathcal F}_{\mathbf U}(\mbl):\Obj\biggl({\mathcal F}(\mathbf W, \mathbf U)\biggr)&\longrightarrow \Obj\biggl({\mathcal F}(\mathbf V, \mathbf U)\biggr),\\
		 \mbeta&\mapsto \mbeta\mbl,\\
	& \xymatrixcolsep{5pc}
\xymatrix{
	& {\mathbf V} \ar[d]^{\mbeta\mbl} \ar@{-->}[dl]_-{\mbl} \\
	{\mathbf W} \ar[r]^{\mbeta} &{\mathbf U}
}
\end{split}
\end{equation}
Now suppose ${\mathcal S}:\mbeta_1\Longrightarrow \mbeta_2$ is a natural transformation between the functors $\mbeta_1, \mbeta_2:\mathbf W\to \mathbf U.$ That is for any 
$a\xrightarrow{f}b\in {\rm Mor}(\mathbf W),$ following diagram commutes in $\mathbf U:$

\begin{equation} \label{D:yonfun}
\xymatrix{
         \ar[d]^{{\mathcal S}(a)} \mbeta_1(a)     \ar[rr]^-{\mbeta_1(f)} & &\mbeta_1(b) \ar[d]_{{\mathcal S}(b)} \\
\mbeta_2(a) \ar[rr]_-{\mbeta_2(f)}& & \mbeta_2(b) 
}.
\end{equation}
Now considering  above commutative diagram in the subcategory ${\rm Image}(\mbl)\subset \mathbf W,$ we have, for any ${\tilde a}\xrightarrow{\tilde f}{\tilde b}\in {\rm Mor}(\mathbf V)$  
\begin{equation} \label{D:yonfunmap}
\xymatrix{
	\ar[d]^{{\mathcal S}(\mbl(\tilde a))} \mbeta_1(\mbl(\tilde a))     \ar[rr]^-{\mbeta_1(\mbl(\tilde f))} & &\mbeta_1(\mbl(\tilde b)) \ar[d]_{{\mathcal S}(\mbl(\tilde b))} \\
	\mbeta_2(\mbl (\tilde a)) \ar[rr]_-{\mbeta_2(\mbl (\tilde f))}& & \mbeta_2(\mbl(\tilde b)) 
}
\end{equation}
commutes. But according to \eqref{D:objfunc}, $${\mathcal F}_{\mathbf U}(\mbl)(\mbeta_i)=\mbeta_i\mbl\in {\rm Fun}(\mathbf V, \mathbf U), \qquad i=1, 2.$$
Thus  commutative diagram \eqref{D:yonfunmap} implies ${\mathcal S}\mbl$ is a natural transformation between ${\mathcal F}_{\mathbf U}(\mbl)(\mbeta_1)$ and ${\mathcal F}_{\mathbf U}(\mbl)(\mbeta_2),$
$${\mathcal S}\mbl:{\mathcal F}_{\mathbf U}(\mbl)(\mbeta_1)\Longrightarrow {\mathcal F}_{\mathbf U}(\mbl)(\mbeta_2).$$
We define
\begin{equation}\label{E:defnat}
	\begin{split}
	&{\mathcal F}_{\mathbf U}(\mbl):\Mor\biggl({\mathcal F}(\mathbf W, \mathbf U)\biggr)\longrightarrow \Mor\biggl({\mathcal F}(\mathbf V, \mathbf U)\biggr),\\
&{\mathcal F}_{\mathbf U}(\mbl):{\mathcal N}(\mathbf W, \mathbf U)\longrightarrow {\mathcal N}(\mathbf V, \mathbf U),\\
&\bigl({\mathcal F}_{\mathbf U}(\mbl)\bigr)(\mathcal S):={\mathcal S}\mbl\in {\mathcal N}(\mathbf V, \mathbf U).
	\end{split}
\end{equation}
It follows from the composition of two natural transformations [19]:
$$\bigl({\mathcal S}_2\circ{\mathcal S}_1\bigr)(a)={\mathcal S}_2(a)\circ {\mathcal S}_1(a), \qquad \forall a\in \Obj(\mathbf W),$$
that ${\mathcal F}_{\mathbf U}(\mbl)$ is a functor.
Thus, using \eqref{D:objfunc} and \eqref{E:defnat}, we obtain the map:
\begin{equation}
	{\mathcal F}_{\mathbf U}:{\rm Mor}(\mathcal C)\to {\rm Mor}(\rmc).\label{E:mormbymbf}
 \end{equation}
 Combining \eqref{E:objfun} and \eqref{E:mormbymbf} we produce our desired maps:
 \begin{equation}\label{E:morobjmbymbf}
	 \begin{split}
		&{\mathcal F}_{\mathbf U}:{\rm Obj}(\mathcal C)\to {\rm Obj}(\rmc),\\
		 &{\mathcal F}_{\mathbf U}:{\rm Mor}(\mathcal C)\to {\rm Mor}(\rmc).
	 \end{split}
 \end{equation}
It is also obvious that ${\mathcal F}_{\mathbf U}$ is functorial. In summary, we have shown that
\begin{prop}\label{Pr:Hompresheaf}
	Let $\mathcal C$ be a category of a collection of (small)categories and $\rmc$ be the category of all (small) categories. Then, for each ${\mathbf U}\in \Obj(\mathcal C),$ we have a contravariant functor ${\mathcal F}_{\mathbf U}:{\mathcal C}^{\rm op}\to \rmc$ given by
	\begin{eqnarray}
	&{\mathcal F}_{\mathbf U}:&{\rm Obj}(\mathcal C)\to {\rm Obj}(\rmc)\nonumber\\
	       && \mathbf V\mapsto {\mathcal F}(\mathbf V, \mathbf U)\nonumber\\
	&{\mathcal F}_{\mathbf U}:&{\rm Mor}(\mathcal C)\to {\rm Mor}(\rmc)\nonumber\\
 &&\biggl({\mathbf V}\xrightarrow{\mbl} {\mathbf W}\biggr)\mapsto \biggl({\mathcal F}(\mathbf W, \mathbf U)\xrightarrow{{\mathcal F}_{\mathbf U}(\mbl)}{\mathcal F}(\mathbf V, \mathbf U)\biggr),\nonumber
	\end{eqnarray}
	where ${\mathcal F}_{\mathbf U}(\mbl)$ is as given in \eqref{D:objfunc} and \eqref{E:defnat}.
\end{prop}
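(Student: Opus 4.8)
Most of the work has in fact been done in the discussion preceding the statement; the plan is simply to organize it into three verifications: (i) $\mathcal{F}_{\mathbf U}$ is well defined on objects; (ii) for each morphism $\mbl$ of $\mathcal C$ the assignment $\mathcal{F}_{\mathbf U}(\mbl)$ given by \eqref{D:objfunc} and \eqref{E:defnat} is genuinely a functor between the relevant functor categories; and (iii) $\mathcal{F}_{\mathbf U}$ itself preserves identities and reverses composition, hence is a contravariant functor ${\mathcal C}^{\rm op}\to\rmc$. For (i) I would only note that since every object of $\mathcal C$ is a small category and $\mathbf U$ is small, the functor category ${\mathcal F}(\mathbf V,\mathbf U)$ is again small, so $\mathbf V\mapsto{\mathcal F}(\mathbf V,\mathbf U)$ does land in $\Obj(\rmc)$.

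For (ii), fix $\mbl:\mathbf V\to\mathbf W$ in $\mathcal C$. On objects $\mathcal{F}_{\mathbf U}(\mbl)$ sends $\mbeta\in{\rm Fun}(\mathbf W,\mathbf U)$ to the composite $\mbeta\mbl\in{\rm Fun}(\mathbf V,\mathbf U)$; on morphisms it sends a natural transformation ${\mathcal S}:\mbeta_1\Rightarrow\mbeta_2$ to the whiskered transformation ${\mathcal S}\mbl$ whose component at $\tilde a\in\Obj(\mathbf V)$ is ${\mathcal S}(\mbl(\tilde a))$. First I would confirm that ${\mathcal S}\mbl$ is a natural transformation $\mbeta_1\mbl\Rightarrow\mbeta_2\mbl$: its naturality square at an arbitrary $\tilde f:\tilde a\to\tilde b$ in $\mathbf V$ is precisely the naturality square of ${\mathcal S}$ at the morphism $\mbl(\tilde f)$ of $\mathbf W$, which commutes — this is diagram \eqref{D:yonfunmap}. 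Then I would check that $\mathcal{F}_{\mathbf U}(\mbl)$ preserves identities and vertical composition: the whiskering of $\mathrm{id}_{\mbeta}$ by $\mbl$ has component $\mathrm{id}_{\mbeta(\mbl(\tilde a))}=\mathrm{id}_{(\mbeta\mbl)(\tilde a)}$, so it is $\mathrm{id}_{\mbeta\mbl}$; and since composition of natural transformations is computed componentwise, $({\mathcal S}_2\circ{\mathcal S}_1)\mbl$ and $({\mathcal S}_2\mbl)\circ({\mathcal S}_1\mbl)$ have the same component ${\mathcal S}_2(\mbl(\tilde a))\circ{\mathcal S}_1(\mbl(\tilde a))$ at every $\tilde a$. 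Hence $\mathcal{F}_{\mathbf U}(\mbl)\in\Mor(\rmc)$.

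For (iii), the identity functor $\mathrm{id}_{\mathbf V}$ is sent to the endofunctor of ${\mathcal F}(\mathbf V,\mathbf U)$ acting by $\mbeta\mapsto\mbeta\,\mathrm{id}_{\mathbf V}=\mbeta$ and ${\mathcal S}\mapsto{\mathcal S}\,\mathrm{id}_{\mathbf V}={\mathcal S}$, i.e. to the identity of ${\mathcal F}(\mathbf V,\mathbf U)$. For contravariance, given $\mathbf V\xrightarrow{\mbl}\mathbf W\xrightarrow{\mbl'}\mathbf X$ in $\mathcal C$, on an object $\mbeta\in{\rm Fun}(\mathbf X,\mathbf U)$ one has $\mathcal{F}_{\mathbf U}(\mbl'\mbl)(\mbeta)=\mbeta(\mbl'\mbl)=(\mbeta\mbl')\mbl=\mathcal{F}_{\mathbf U}(\mbl)\bigl(\mathcal{F}_{\mathbf U}(\mbl')(\mbeta)\bigr)$ by associativity of functor composition, and the analogous identity on natural transformations follows from associativity of whiskering; thus $\mathcal{F}_{\mathbf U}(\mbl'\mbl)=\mathcal{F}_{\mathbf U}(\mbl)\circ\mathcal{F}_{\mathbf U}(\mbl')$, which is exactly the functoriality of $\mathcal{F}_{\mathbf U}:{\mathcal C}^{\rm op}\to\rmc$.

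There is no deep obstacle here; the only point requiring genuine (if routine) care is the interplay in step (ii) between the two layers of structure — that whiskering a natural transformation by $\mbl$ again yields a natural transformation, and that this operation is compatible with the componentwise composition law for natural transformations. Everything else is bookkeeping with the units and associativity of functor composition.
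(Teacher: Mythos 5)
Your proposal is correct and follows essentially the same route as the paper, which carries out exactly these verifications (well-definedness on objects, that whiskering by $\mbl$ gives a functor ${\mathcal F}(\mathbf W,\mathbf U)\to{\mathcal F}(\mathbf V,\mathbf U)$ compatible with componentwise composition of natural transformations, and functoriality of ${\mathcal F}_{\mathbf U}$) in the discussion preceding the statement. You merely spell out the identity-preservation and contravariance checks that the paper dismisses as obvious, which is a harmless refinement rather than a different argument.
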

Similarly we can also define the covariant counterpart  ${\overline {{\mathcal F}}_{\mathbf U}}$ of ${\mathcal F}_{\mathbf U}$ as
\begin{eqnarray}
	&{\overline {\mathcal F}}_{\mathbf U}:&{\mathcal C}\to \rmc,\nonumber\\
	&{\overline {\mathcal F}}_{\mathbf U}:&{\rm Obj}(\mathcal C)\to {\Obj}(\rmc)\nonumber\\
	&& \mathbf V\mapsto {\mathcal F}(\mathbf U, \mathbf V)\label{covobj}\\
	&{\overline {\mathcal F}}_{\mathbf U}:&{\rm Mor}(\mathcal C)\to {\rm Mor}(\rmc)\nonumber\\
	&&\biggl({\mathbf V}\xrightarrow{\mbl} {\mathbf W}\biggr)\mapsto \biggl({\mathcal F}(\mathbf U, \mathbf V)\xrightarrow{{\overline {\mathcal F}}_{\mathbf U}(\mbl)}{\mathcal F}(\mathbf U, \mathbf W)\biggr),\label{covmor}
\end{eqnarray}
where the right hand side of \eqref{covmor} is given as follows.
\begin{equation}\label{D:objfunccov}
	\begin{split}
		{\overline {\mathcal F}}_{\mathbf U}(\mbl):\Obj\biggl({\mathcal F}(\mathbf U, \mathbf V)\biggr)&\longrightarrow \Obj\biggl({\mathcal F}(\mathbf U,  \mathbf W)\biggr),\\
		 \mbeta&\mapsto \mbl\mbeta,\\
	& \xymatrixcolsep{5pc}
\xymatrix{
	& {\mathbf U} \ar[d]^{\mbl\mbeta} \ar@{-->}[dl]_-{\mbeta} \\
	{\mathbf V} \ar[r]^{\mbl} &{\mathbf W}
}
\end{split}
\end{equation}
and, 
\begin{equation}\label{E:defnatcov}
	\begin{split}
	&{\overline {\mathcal F}}_{\mathbf U}(\mbl):\Mor\biggl({\mathcal F}(\mathbf U, \mathbf V)\biggr)\longrightarrow \Mor\biggl({\mathcal F}(\mathbf  V, \mathbf W)\biggr),\\
&{\overline {\mathcal F}}_{\mathbf U}(\mbl):{\mathcal N}(\mathbf U, \mathbf V)\longrightarrow {\mathcal N}(\mathbf U, \mathbf W),\\
&\bigl({\overline {\mathcal F}}_{\mathbf U}(\mbl)\bigr)(\mathcal S):=\mbl{\mathcal S}\in {\mathcal N}(\mathbf U, \mathbf W).
	\end{split}
\end{equation}

Let $\mathbf U, \mathbf V\in \Obj(\mathcal C).$ Then, by Proposition~\ref{Pr:Hompresheaf}, we have a pair of functors ${\mathcal F}_{\mathbf U}, {\mathcal F}_{\mathbf V}:{\mathcal C}^{\rm op}\to \rmc.$ On the other hand, given any $\mbl:\mathbf U\to \mathbf V,$ according to \eqref{covmor}, we have the functor

\begin{equation}\nonumber
{{\overline {\mathcal F}}_{\mathbf W}(\mbl)}:{\mathcal F}(\mathbf W, \mathbf U)\longrightarrow {\mathcal F}(\mathbf W, \mathbf V).
\end{equation}
Note that, by definition [see \eqref{E:objfun}] , ${\mathcal F}(\mathbf W, \mathbf U)={\mathcal F}_{\mathbf U}(\mathbf W)$ and ${\mathcal F}(\mathbf W, \mathbf V)={\mathcal F}_{\mathbf V}(\mathbf W).$ Hence,
\begin{equation}\label{convfunnat}
{{\overline {\mathcal F}}_{\mathbf W}(\mbl)}:{\mathcal F}_{\mathbf U}(\mathbf W)\longrightarrow {\mathcal F}_{\mathbf V}(\mathbf W).
\end{equation}
In fact ${{\overline {\mathcal F}}(\mbl)},$ defined as 
\begin{equation}\label{E:natutransmap}
	\biggl({{\overline {\mathcal F}}(\mbl)}\biggr)(\mathbf W):={{\overline {\mathcal F}}_{\mathbf W}(\mbl)},
\end{equation}	
	is a natural transformation from ${\mathcal F}_{\mathbf U}$ to ${\mathcal F}_{\mathbf V}:$
$${{\overline {\mathcal F}}(\mbl)}:{\mathcal F}_{\mathbf U}\Longrightarrow{\mathcal F}_{\mathbf V};$$
that is, following diagram commutes for all $\mathbf W, \mathbf W'\in \Obj(\mathcal C)$ and ${\mathbf \Phi}\in{\rm Hom}(\mathbf W', \mathbf W)={\rm Fun}(\mathbf W', \mathbf W):$
\begin{equation} \label{D:yoniso}
\xymatrix{
	\ar[d]^{{\overline {\mathcal F}}_{\mathbf W}(\mbl)} {\mathcal F}_{\mathbf U}(\mathbf W)     \ar[rr]^-{{\mathcal F}_{\mathbf U}(\mathbf \Phi)} & & {\mathcal F}_{\mathbf U}(\mathbf W')\ar[d]_{{\overline {\mathcal F}}_{\mathbf W'}(\mbl)} \\
{\mathcal F}_{\mathbf V}(\mathbf W) \ar[rr]_-{{\mathcal F}_{\mathbf V}(\mathbf \Phi)}& & {\mathcal F}_{\mathbf V}(\mathbf W') 
}.
\end{equation}
Let us verify  commutivity of the above diagram.
\begin{lemma}\label{L:commu}
	Diagram in \eqref{D:yoniso} commutes for all ${\mathbf \Phi}\in{\rm Hom}(\mathbf W', \mathbf W)={\rm Fun}(\mathbf W', \mathbf W).$ That means, we have a natural transformation
	$${{\overline {\mathcal F}}(\mbl)}:{\mathcal F}_{\mathbf U}\Longrightarrow{\mathcal F}_{\mathbf V},$$  
	given by \eqref{E:natutransmap}.	
	\end{lemma}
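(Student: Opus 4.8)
The plan is to verify commutativity of \eqref{D:yoniso} in the most direct way. Every arrow in that square is a functor between the four categories $\mathcal F(\mathbf W,\mathbf U)$, $\mathcal F(\mathbf W',\mathbf U)$, $\mathcal F(\mathbf W,\mathbf V)$, $\mathcal F(\mathbf W',\mathbf V)$, so it suffices to evaluate the two composite functors $\overline{\mathcal F}_{\mathbf W'}(\mbl)\circ\mathcal F_{\mathbf U}({\mathbf \Phi})$ and $\mathcal F_{\mathbf V}({\mathbf \Phi})\circ\overline{\mathcal F}_{\mathbf W}(\mbl)$ on an arbitrary object and on an arbitrary morphism of $\mathcal F(\mathbf W,\mathbf U)$ and check that the results agree. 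This reduces the lemma to associativity of composition of functors, together with associativity of whiskering a natural transformation by functors on the two sides.

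First I would unwind the definitions. Fixing $\mbl:\mathbf U\to\mathbf V$ and ${\mathbf \Phi}\in{\rm Hom}(\mathbf W',\mathbf W)={\rm Fun}(\mathbf W',\mathbf W)$, recall from \eqref{D:objfunc} and \eqref{E:defnat} that $\mathcal F_{\mathbf U}({\mathbf \Phi})$ is ``precomposition with ${\mathbf \Phi}$'': it sends a functor $\mbeta:\mathbf W\to\mathbf U$ to $\mbeta{\mathbf \Phi}$ and a natural transformation ${\mathcal S}$ to the right whiskering ${\mathcal S}{\mathbf \Phi}$. Dually, from \eqref{D:objfunccov} and \eqref{E:defnatcov}, $\overline{\mathcal F}_{\mathbf W}(\mbl)$ is ``postcomposition with $\mbl$'': it sends $\mbeta:\mathbf W\to\mathbf U$ to $\mbl\mbeta$ and ${\mathcal S}$ to the left whiskering $\mbl{\mathcal S}$. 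The same descriptions hold for $\mathcal F_{\mathbf V}({\mathbf \Phi})$ and $\overline{\mathcal F}_{\mathbf W'}(\mbl)$ with the evident relabelling.

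Next, on objects: for $\mbeta\in{\rm Fun}(\mathbf W,\mathbf U)$, the top-then-right route gives $\overline{\mathcal F}_{\mathbf W'}(\mbl)\bigl(\mathcal F_{\mathbf U}({\mathbf \Phi})(\mbeta)\bigr)=\overline{\mathcal F}_{\mathbf W'}(\mbl)(\mbeta{\mathbf \Phi})=\mbl(\mbeta{\mathbf \Phi})$, while the left-then-bottom route gives $\mathcal F_{\mathbf V}({\mathbf \Phi})\bigl(\overline{\mathcal F}_{\mathbf W}(\mbl)(\mbeta)\bigr)=\mathcal F_{\mathbf V}({\mathbf \Phi})(\mbl\mbeta)=(\mbl\mbeta){\mathbf \Phi}$; these coincide since composition of functors is associative. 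On morphisms: for a natural transformation ${\mathcal S}$ between functors $\mathbf W\to\mathbf U$, the first route yields $\mbl({\mathcal S}{\mathbf \Phi})$ and the second yields $(\mbl{\mathcal S}){\mathbf \Phi}$; evaluating components at an object $x$ of $\mathbf W'$, both equal $\mbl\bigl({\mathcal S}({\mathbf \Phi}(x))\bigr)$ by the component formula for whiskering. Hence the two composite functors agree on objects and on morphisms, which is exactly the commutativity of \eqref{D:yoniso}. Since ${\mathbf \Phi}$ was arbitrary and, by \eqref{convfunnat}, $\bigl(\overline{\mathcal F}(\mbl)\bigr)(\mathbf W)=\overline{\mathcal F}_{\mathbf W}(\mbl)$ is a morphism $\mathcal F_{\mathbf U}(\mathbf W)\to\mathcal F_{\mathbf V}(\mathbf W)$ in $\rmc$, this shows $\overline{\mathcal F}(\mbl)$ defined by \eqref{E:natutransmap} is a natural transformation $\mathcal F_{\mathbf U}\Longrightarrow\mathcal F_{\mathbf V}$.

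I do not expect a genuine obstacle here: the whole content is the associativity/interchange law for composition in $\rmc$. The only point demanding care is the bookkeeping of variance — $\mathcal F_{\mathbf U}$ is contravariant in its ``$\mathbf W$''-slot, whereas the family $\overline{\mathcal F}_{(-)}(\mbl)$ is built from covariant postcomposition — so one must be certain that ${\mathbf \Phi}$ is whiskered on the right and $\mbl$ on the left, and that the ``component at an object of $\mathbf W'$'' bookkeeping is applied consistently when the two whiskered natural transformations are compared.
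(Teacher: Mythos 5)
Your proposal is correct and follows essentially the same route as the paper: a direct check that the two composite functors agree on objects (reducing to associativity of functor composition, both routes giving $\mbl\mbeta{\mathbf \Phi}$) and on morphisms (both routes giving the whiskered transformation $\mbl{\mathcal S}{\mathbf \Phi}$). Your extra remark about comparing components at an object of $\mathbf W'$ only makes explicit a step the paper leaves implicit.
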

\begin{proof}
	We have to show that diagram commutes both at the level of objects and morphisms. 
	
	First let us verify the commutivity for the objects.

	Let ${\mathbf \Psi}\in \Obj({\mathcal F}_{\mathbf U}(\mathbf W))= {\rm Fun}(\mathbf W, \mathbf U).$ Then, 
	\begin{equation}\label{E:comobj1}
		\begin{split}
			&\biggl({\mathcal F}_{\mathbf U}(\mathbf \Phi)\biggr)({\mathbf \Psi})={\mathbf \Psi}{\mathbf \Phi}\in {\rm Fun}(\mathbf W', \mathbf U)=\Obj({\mathcal F}_{\mathbf U}(\mathbf W')), [\hbox{using} \eqref{D:objfunc}] \\
			&\Rightarrow  \biggl({\overline {\mathcal F}}_{\mathbf W'}\biggr)({\mathbf \Psi}{\mathbf \Phi})=\mbl{\mathbf \Psi}{\mathbf \Phi}\in 	{\rm Fun}(\mathbf W', \mathbf V)=\Obj({\mathcal F}_{\mathbf V}(\mathbf W')) [\hbox{using} \eqref{D:objfunccov}].   
\end{split}
	\end{equation}
	
	On the other hand,
	\begin{equation}\label{E:comobj2}
		\begin{split}
		&\biggl({\overline {\mathcal F}}_{\mathbf W}\biggr)(\mathbf \Psi)=\mbl{\mathbf \Psi}\in 	{\rm Fun}(\mathbf W,  \mathbf V)=\Obj({\mathcal F}_{\mathbf V}(\mathbf W)), \\			
			&\Rightarrow \biggl({\mathcal F}_{\mathbf V}(\mathbf \Phi)\biggr)(\mbl{\mathbf \Psi})=\mbl{\mathbf \Psi}{\mathbf \Phi}\in {\rm Fun}(\mathbf W', \mathbf V)=\Obj({\mathcal F}_{\mathbf V}(\mathbf W')).		
	\end{split}
	\end{equation}	
	 \eqref{E:comobj1}, \eqref{E:comobj2} imply  that the diagram in \eqref{D:yoniso} commutes at the level of objects.
	 
	 Now let us verify commutivity for the morphisms.

	Let ${\mathcal S}\in \Mor({\mathcal F}_{\mathbf U}(\mathbf W))= {\mathcal N}(\mathbf W, \mathbf U).$ Then, 
	\begin{equation}\label{E:commor1}
		\begin{split}
			&\biggl({\mathcal F}_{\mathbf U}(\mathbf \Phi)\biggr)({\mathcal S})={\mathcal S}{\mathbf \Phi}\in {\mathcal N}(\mathbf W', \mathbf U)=\Mor({\mathcal F}_{\mathbf U}(\mathbf W')), [\hbox{using} \eqref{E:defnat}]	\\
			&\Rightarrow  \biggl({\overline {\mathcal F}}_{\mathbf W'}\biggr)({\mathcal S}{\mathbf \Phi})=\mbl{\mathcal S}{\mathbf \Phi}\in 	{\mathcal N}(\mathbf W', \mathbf V)=\Mor({\mathcal F}_{\mathbf V}(\mathbf W'))	[\hbox{using} \eqref{E:defnatcov}]. 
\end{split}
	\end{equation}
	
	On the other hand,
	\begin{equation}\label{E:commor2}
		\begin{split}
		&\biggl({\overline {\mathcal F}}_{\mathbf W}\biggr)(\mathcal S)=\mbl{\mathcal S}\in 	{\mathcal N}(\mathbf W,  \mathbf V)=\Mor({\mathcal F}_{\mathbf V}(\mathbf W)),\\			
			&\Rightarrow \biggl({\mathcal F}_{\mathbf V}(\mathbf \Phi)\biggr)(\mbl{\mathcal S})=\mbl{\mathcal S}{\mathbf \Phi}\in {\mathcal N}(\mathbf W', \mathbf V)=\Mor({\mathcal F}_{\mathbf V}(\mathbf W')),			
	\end{split}
	\end{equation}	
	 \eqref{E:commor1}, \eqref{E:commor2} imply  that the diagram in \eqref{D:yoniso} commutes also at the level of morphisms.

	\end{proof}
	In other words, we have a set map:
	\begin{equation}\label{E:yonmap}
	\begin{split}
	{\overline {\mathcal F}}: &{\rm Fun}(\mathbf U, \mathbf V)\to {\rm Nat}({\mathcal F}_{\mathbf U}, {\mathcal F}_{\mathbf V})\\
	&\mbl\mapsto {\overline {\mathcal F}}(\mbl).
	\end{split}
		\end{equation}
		We propose  a version  of \textit{Yoneda lemma} in this context.
\begin{prop}\label{Pr:yoneda}
There exists an isomorphism between  ${\rm Fun}(\mathbf U, \mathbf V)$ and ${\rm Nat}({\mathcal F}_{\mathbf U}, {\mathcal F}_{\mathbf V}):$
\begin{equation}\label{E:yonlemma}
{\rm Fun}(\mathbf U, \mathbf V)\cong{\rm Nat}({\mathcal F}_{\mathbf U}, {\mathcal F}_{\mathbf V}).
\end{equation}
The set map ${\overline {\mathcal F}}$ in \eqref{E:yonmap} defines the corresponding bijection.
\end{prop}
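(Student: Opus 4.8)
The plan is to mimic the classical proof of the Yoneda lemma by exhibiting an explicit two-sided inverse to the map $\overline{\mathcal F}$ of \eqref{E:yonmap}, obtained by evaluating a natural transformation on an identity functor. Concretely, set
\[
\Lambda\colon {\rm Nat}({\mathcal F}_{\mathbf U},{\mathcal F}_{\mathbf V})\longrightarrow {\rm Fun}(\mathbf U,\mathbf V),\qquad \Lambda(\alpha):=\alpha_{\mathbf U}\bigl({\rm id}_{\mathbf U}\bigr),
\]
where for $\alpha\colon{\mathcal F}_{\mathbf U}\Longrightarrow{\mathcal F}_{\mathbf V}$ the component $\alpha_{\mathbf U}\colon{\mathcal F}(\mathbf U,\mathbf U)\to{\mathcal F}(\mathbf U,\mathbf V)$ is a functor and ${\rm id}_{\mathbf U}$ is the identity functor of $\mathbf U$, regarded as an object of ${\mathcal F}(\mathbf U,\mathbf U)$. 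It then suffices to prove $\Lambda\circ\overline{\mathcal F}={\rm id}$ and $\overline{\mathcal F}\circ\Lambda={\rm id}$.

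The first identity is immediate: by \eqref{E:natutransmap} and \eqref{D:objfunccov} the $\mathbf U$-component of $\overline{\mathcal F}(\mbl)$ carries an object to its post-composite with $\mbl$, whence $\Lambda\bigl(\overline{\mathcal F}(\mbl)\bigr)=\mbl\,{\rm id}_{\mathbf U}=\mbl$. For the second, fix $\alpha$, put $\mbl:=\Lambda(\alpha)$, and check that $\alpha_{\mathbf W}=\overline{\mathcal F}_{\mathbf W}(\mbl)$ as functors ${\mathcal F}(\mathbf W,\mathbf U)\to{\mathcal F}(\mathbf W,\mathbf V)$ for every $\mathbf W\in\Obj({\mathcal C})$, first on objects and then on morphisms. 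On objects this is the usual Yoneda computation: an object $\mbeta\in{\rm Fun}(\mathbf W,\mathbf U)$ is itself a morphism $\mathbf W\to\mathbf U$ of ${\mathcal C}$, so the naturality square of $\alpha$ at $\mbeta$, namely $\alpha_{\mathbf W}\circ{\mathcal F}_{\mathbf U}(\mbeta)={\mathcal F}_{\mathbf V}(\mbeta)\circ\alpha_{\mathbf U}$, evaluated on the object ${\rm id}_{\mathbf U}$ and combined with the pre-composition formula \eqref{D:objfunc}, gives $\alpha_{\mathbf W}(\mbeta)={\mathcal F}_{\mathbf V}(\mbeta)(\mbl)=\mbl\mbeta=\overline{\mathcal F}_{\mathbf W}(\mbl)(\mbeta)$; this step is routine.

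The main obstacle is matching $\alpha_{\mathbf W}$ and $\overline{\mathcal F}_{\mathbf W}(\mbl)$ on morphisms, i.e.\ proving $\alpha_{\mathbf W}({\mathcal T})=\mbl\,{\mathcal T}$ (the whiskering of ${\mathcal T}$ by $\mbl$) for each natural transformation ${\mathcal T}\colon\mbeta_1\Longrightarrow\mbeta_2$ in ${\mathcal F}(\mathbf W,\mathbf U)$. A $2$-cell ${\mathcal T}$ is not a morphism of ${\mathcal C}$, so naturality of $\alpha$ is not directly available for it, and the naturality squares of $\alpha$, which only involve the functor-precomposition maps ${\mathcal F}_{\mathbf U}({\mathbf \Phi})$, do not visibly pin down $\alpha_{\mathbf W}$ on ${\mathcal T}$. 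My plan is to promote ${\mathcal T}$ to an object: it is the same datum as a functor $\widehat{\mathcal T}\colon\mathbf W\times\mathbf J\to\mathbf U$, with $\mathbf J=\{0\to 1\}$, restricting along the two inclusions $j_0,j_1\colon\mathbf W\to\mathbf W\times\mathbf J$ to $\mbeta_1$ and $\mbeta_2$; applying the object-level identity already proved (to $\mathbf W\times\mathbf J$ in place of $\mathbf W$) yields $\alpha_{\mathbf W\times\mathbf J}(\widehat{\mathcal T})=\mbl\,\widehat{\mathcal T}$, and $\mbl\,\widehat{\mathcal T}$ is precisely the functor encoding $\mbl\,{\mathcal T}$; one then transports this down along the naturality squares of $\alpha$ at $j_0$, $j_1$ and at a projection $\mathbf W\times\mathbf J\to\mathbf W$ to extract $\alpha_{\mathbf W}({\mathcal T})$. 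Making this rigorous requires that $\mathbf W\times\mathbf J$ and the relevant structural functors lie in ${\mathcal C}$; where such auxiliary objects are unavailable one must instead exploit the concrete description of the ${\mathcal C}$ in hand, and it is in this morphism-level bookkeeping that essentially all of the real content of the proposition is concentrated. Once both identities are in place, $\overline{\mathcal F}_{\mathbf W}(\mbl)=\alpha_{\mathbf W}$ for all $\mathbf W$, so $\overline{\mathcal F}\circ\Lambda={\rm id}$, and with the first identity this shows $\overline{\mathcal F}$ is the claimed bijection.
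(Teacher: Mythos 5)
Your overall strategy coincides with the paper's: the candidate inverse is evaluation at ${\mathbf {Id}}_{\mathbf U}$, the identity $\Lambda\circ\overline{\mathcal F}={\rm id}$ (equivalently, injectivity) follows from \eqref{D:objfunccov}, and the object-level half of $\overline{\mathcal F}\circ\Lambda={\rm id}$ is the classical Yoneda computation using the naturality square at $\mbeta$ itself. Where you go beyond the paper is in isolating the real issue: each component $\alpha_{\mathbf W}$ is a \emph{functor} ${\mathcal F}(\mathbf W,\mathbf U)\to{\mathcal F}(\mathbf W,\mathbf V)$, and the naturality squares \eqref{D:yoniso} constrain it only through the precomposition functors ${\mathcal F}_{\mathbf U}({\mathbf\Phi})$, hence pin it down on objects but not on morphisms ${\mathcal T}\colon\mbeta_1\Rightarrow\mbeta_2$ of ${\mathcal F}(\mathbf W,\mathbf U)$. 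The paper's surjectivity argument is the single clause ``which clearly maps to $\mathbf\chi$ under the action of $\overline{\mathcal F}$'' and never confronts this point, so your write-up is strictly more careful than the source on the only step that carries content.

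That said, your proposed repair does not close the gap under the stated hypotheses, and you are right to be uneasy about it: the cylinder $\mathbf W\times\mathbf J$ and the functors $j_0,j_1$ and the projection must be objects and morphisms of $\mathcal C$, which nothing in the proposition guarantees (for ${\mathcal O}(\mathbf B)$ the only morphisms are inclusions, so even the naturality squares you would transport along are unavailable). Worse, without some such extra hypothesis the surjectivity claim can actually fail. Take $\mathcal C$ to have a single object $\mathbf U=\mathbf V$, the one-object category on the commutative monoid $(\mathbb N,+)$, with all endofunctors as morphisms. Every endofunctor is multiplication by some $k$, one checks ${\rm Nat}(\mbeta,\mbeta)\cong\mathbb N$ with vertical composition given by addition, there are no $2$-cells between distinct endofunctors, and precomposition \eqref{E:defnat} acts as the identity on these $2$-cells. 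The assignment that is the identity on objects of ${\mathcal F}(\mathbf U,\mathbf U)$ and doubles every $2$-cell is then a legitimate natural transformation ${\mathcal F}_{\mathbf U}\Longrightarrow{\mathcal F}_{\mathbf U}$ in the sense used in Lemma~\ref{L:commu}, but it is not $\overline{\mathcal F}(\mbl)$ for any $\mbl$. So the obstruction you found is genuine and lies in the proposition (or in the purely $1$-categorical notion of natural transformation being used) rather than in your argument; a correct statement either restricts to categories $\mathcal C$ closed under $(-)\times\mathbf J$ as you suggest, or imposes on $\alpha$ the additional requirement of compatibility with whiskering, i.e.\ works $2$-categorically.
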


\begin{proof}
First we show the map is injective.
Suppose for $\mbl_1, \mbl_2\in {\rm Fun}(\mathbf U, \mathbf V),$ we have,
$${\overline {\mathcal F}}(\mbl_1)={\overline {\mathcal F}}(\mbl_2).$$
That means, for any $\mathbf W\in \Obj(\mathcal C),$ 
\begin{equation}\nonumber
\begin{split}
&\biggl({\overline {\mathcal F}}(\mbl_1)\biggr)(\mathbf W)=\biggl({\overline {\mathcal F}}(\mbl_2)\biggr)(\mathbf W),\\
&\Rightarrow {\overline {\mathcal F}}_{\mathbf W}(\mbl_1)={\overline {\mathcal F}}_{\mathbf W}(\mbl_2),
\end{split}
\end{equation}
and,
by \eqref{D:objfunccov},
\begin{equation}
\begin{split}
{\overline {\mathcal F}}_{\mathbf W}(\mbl_1), {\overline {\mathcal F}}_{\mathbf W}(\mbl_2): &\Obj\biggl({\mathcal F}(\mathbf W, \mathbf U)\biggr)\to \Obj\biggl({\mathcal F}(\mathbf W,  \mathbf V)\biggr),\\
& {\rm Fun}(\mathbf W, \mathbf U)\to {\rm Fun}(\mathbf W, \mathbf V).\end{split}
\end{equation}
Putting $\mathbf W=\mathbf U,$ in the above equation, we obtain, 
$${\overline {\mathcal F}}_{\mathbf U}(\mbl_1), {\overline {\mathcal F}}_{\mathbf U}(\mbl_2): {\rm Fun}(\mathbf U, \mathbf U)\to {\rm Fun}(\mathbf U, \mathbf V).$$
Thus $$\biggl({\overline {\mathcal F}}_{\mathbf U}(\mbl_1)\biggr) ({\mathbf {Id}}_{\mathbf U})=\biggl({\overline {\mathcal F}}_{\mathbf U}(\mbl_2)\biggr) ({\mathbf {Id}}_{\mathbf U}),$$
where ${\mathbf {Id}}_{\mathbf U}$ is the identity functor of $\mathbf U.$ Then the second equation of \eqref{D:objfunccov} implies
$$\mbl_1=\mbl_2.$$

We prove   surjecitivity as follows.

Given any natural transformation \begin{equation}\nonumber
\begin{split}
&{\mathbf \chi}: {\mathcal F}_{\mathbf U}\Longrightarrow \mathcal {\mathcal F}_{\mathbf V},\\
&{\mathbf \chi}(\mathbf W): {\mathcal F}_{\mathbf U}(\mathbf W)\Longrightarrow {\mathcal F}_{\mathbf V}(\mathbf W),\\
&{\mathbf \chi}(\mathbf W): {\mathcal F}({\mathbf W}, \mathbf U)\Longrightarrow {\mathcal F}({\mathbf W}, \mathbf V),
\end{split}
\end{equation}
 we have an element 
$$\mbl:=\biggl({\mathbf \chi}(\mathbf U)\biggr)(\mathbf {Id}_{\mathbf U})\in \Obj\biggl({\mathcal F}({\mathbf U}, \mathbf V)\biggr)={\rm Fun}({\mathbf U}, \mathbf V),$$
which clearly maps to $\mathbf \chi$ under the action of $\overline {\mathcal F}.$

\end{proof}

\begin{exmp}\label{Ex:yon}
\rm{
	Let us consider the case when $\mathcal C={\mathcal O}(\mathbf B)$, where ${\mathcal O}(\mathbf B)$ is defined in \eqref{E:ob}. Thus,  if $\mathbf V$ is a subcategory of $\mathbf U,$ then 	${\mathcal F}(\mathbf V, \mathbf U)$ is an one object category.  Otherwise it is an empty category:
\vskip 0.4 cm 
\begin{equation}
  \left.
  \raisebox{10pt}[30pt]{\smash{$\begin{array}{r@{}l@{\,}l}
   &{\mathcal F}(\mathbf V, \mathbf U)={\mathbf \emptyset}, \qquad \mathbf V\not\subset \mathbf U&\\
   &\Obj\biggl({\mathcal F}(\mathbf V, \mathbf U)\biggr)=\{\mathbf i\}, \mathbf i:\mathbf V\hookrightarrow \mathbf U\\
   &\Mor\biggl({\mathcal F}(\mathbf V, \mathbf U)\biggr)=\{{\mathbf S}\colon\mathbf i\Longrightarrow \mathbf i\} \\
  \end{array}$}}
  \right\} \quad \mathbf V\subset \mathbf U.
\end{equation}
Suppose ${\mathcal F}(\mathbf V, \mathbf U)$ is non empty.
\begin{equation} \nonumber
\xymatrix{
         \ar[d]^{{\mathbf S}(a)} {{\mathbf i}(a)=a}     \ar[rr]^-{{{\mathbf i}(f)}=f} & &{{\mathbf i}(b)=b} \ar[d]_{{\mathbf S}(b)} \\
{\mathbf i}(a)=a \ar[rr]_-{{{\mathbf i}(f)}=f}& & {{\mathbf i}(b)=b}) 
}.
\end{equation}

 Then, it is easy to see that a morphism $\mathbf S\in \Mor\biggl({\mathcal F}(\mathbf V, \mathbf U)\biggr)$ is given by an element ${\mathbf S}(a)$ of ${\rm Hom}(a, a)$ for each $a\in \Obj(\mathbf V),$
$${\mathbf S}=\{{\mathbf S}({a})| \forall a\in \Obj(\mathbf V)\},$$
such that for any $a\xrightarrow{f}b\in \Mor(\mathbf V),$ following condition holds:
\begin{equation}\label{E:almi}
{\mathbf S}(b)\circ f=f\circ{\mathbf S}(a).
\end{equation}
}
Now suppose 	${\mathcal F}(\mathbf V', \mathbf U)={\mathcal F}_{\mathbf U}(\mathbf V')$ and ${\mathcal F}(\mathbf V, \mathbf U)={\mathcal F}_{\mathbf U}(\mathbf V)$ are non empty categories. Also, suppose ${\rm Fun
}(\mathbf V, \mathbf V')$ is non empty. That means we have a filtration of categories:
$$\mathbf V\subset \mathbf V'\subset \mathbf U.$$
Similarly, if ${\mathcal F}(\mathbf V', \mathbf U')={\mathcal F}_{\mathbf U'}(\mathbf V')$ and ${\mathcal F}(\mathbf V, \mathbf U')={\mathcal F}_{\mathbf U'}(\mathbf V)$ are non empty categories, and ${\rm Fun
}(\mathbf V, \mathbf V')$ is non empty, then we have:
$$\mathbf V\subset \mathbf V'\subset \mathbf U'.$$

Now it is immediate that a natural transformation $\chi:{\mathcal F}_{\mathbf U'}\Longrightarrow {\mathcal F}_{\mathbf U}$ exists if and only if
$\mathbf U'\subset \mathbf U.$ Because if following diagram commutes,
\begin{equation} \nonumber
\xymatrix{
	\ar[d]^{{\chi}({\mathbf V'})} {\mathcal F}_{\mathbf U'}(\mathbf V')     \ar[rr]^-{{\mathcal F}_{\mathbf U'}(\mathbf i)} & & {\mathcal F}_{\mathbf U'}(\mathbf V)\ar[d]_{{\chi}(\mathbf V)}\\
{{\mathcal F}_{\mathbf U}(\mathbf V')} \ar[rr]_-{{\mathcal F}_{\mathbf U}(\mathbf i)}& & {\mathcal F}_{\mathbf U}(\mathbf V) 
}
 \end{equation}
then replacing $\mathbf V'$ with $\mathbf  U',$ we will have

\begin{equation} \nonumber
\xymatrix{
	\ar[d]^{{\chi}(\mathbf U')} {\mathcal F}_{\mathbf U'}(\mathbf U')     \ar[rr]^-{{\mathcal F}_{\mathbf U'}(\mathbf i)} & & {\mathcal F}_{\mathbf U'}(\mathbf V)\ar[d]_{{\chi}({\mathbf V})}\\
{{\mathcal F}_{\mathbf U}(\mathbf U')} \ar[rr]_-{{\mathcal F}_{\mathbf U}(\mathbf i)}& & {\mathcal F}_{\mathbf U}(\mathbf V) 
}.
\end{equation}
In order to ${\mathcal F}_{\mathbf U}(\mathbf U')$ be non empty, $\mathbf U'$ must be a subcategory of $\mathbf U:$
$$\mathbf U'\subset \mathbf U.$$   
Suppose $\mathbf U'\subset \mathbf U$  ,  then the  unique natural transformation $\chi$  is given as follows.  For any 
$$\mathbf V'\subset \mathbf U'\subset \mathbf U,$$
and  a given
\begin{eqnarray}
&&{\mathbf S}\in  \Mor\biggl({\mathcal F}_{\mathbf U'}(\mathbf V')\biggr) = \Mor\biggl({\mathcal F}(\mathbf V', \mathbf U')\biggr),\nonumber\\
&&\mathbf S=\Big\{{\mathbf S}(a)\in {\rm Hom}(a, a), \forall a\in \Obj(\mathbf V')| f\circ {\mathbf S}(a)={\mathbf S}(b)\circ f, \forall (a\xrightarrow{f}b)\in {\rm Mor}(\mathbf V')\Big\},\nonumber
\end{eqnarray}
$\biggl(\chi({\mathbf V'})\biggr)(\mathbf S)$ is  simply obtained by treating ${\mathbf S}$ as an element of $ \Mor\biggl({\mathcal F}_{\mathbf U}(\mathbf V')\biggr) ={\rm Mor}\biggl({\mathcal F}(\mathbf V', \mathbf U)\biggr)$ (see the paragraph before \eqref{E:almi}).\fbox{}

\end{exmp}
\section{Sieves of categories}\label{S:cvs}
Recall that the \textit{presheaf of sets}, over a category $\mathbf C,$ is defined as a contravariant functor from $\mathbf C$ to $\mathbf {Set}$
\begin{equation}\label{E:trpresheaf}
	R:{\mathbf C}^{\rm op}\to \mathbf {Set},
\end{equation}
where $\mathbf {Set}$ is the category of (small) sets. For example the contravariant Hom-functor  $${\mathbf {Hom}}(-, c):{\mathbf C}^{\rm op}\to \mathbf {Set},$$
corresponding to each $c\in {\rm Obj}(\mathbf C),$ defines a $\mathbf {Set}$ valued presheaf. Instead of  presheaves of sets, it is possible to introduce additional algebraic structures into the picture. For instance, one may consider presheaves of groups or rings respectively given by  $R:{\mathbf C}^{\rm op}\to \mathbf {Grp}$ or $R:{\mathbf C}^{\rm op}\to \mathbf {Ring}.$ Here $\mathbf {Grp}$ and $\mathbf {Ring}$ respectively denote (locally small) category of (small) groups and (locally small) category of (small) rings. In this section we will focus on the notion of sieves in the context of the category $\mathbf {Cat}.$ 

Let $\mathbf C$ be a category and $c\in {\rm Obj}(\mathbf C).$ A \textit{sieve on} $c$ is a presheaf $R_c:{\mathbf C}^{\rm op}\to \mathbf {Set}$ such that $R_c$ is a sub functor of the contravariant Hom-functor ${\mathbf {Hom}}(-, c);$ that is, for all objects $d\in \Obj(\mathbf C)$ and all morphisms $d\xrightarrow{f}d'\in \Mor(\mathbf C),$ $R_c$ should satisfy
\begin{equation}\label{E:trsieve}
	\begin{split}
		&R_c(d)\subset {\mathbf {Hom}}(d, c),\\
	 &\biggl(R_c(d')\xrightarrow{R_c(f)}R_c(d)\biggr)={\mathbf {Hom}}(f, c)|_{R_c(d')},
\end{split}
\end{equation}
where ${\mathbf {Hom}}(f, c)|_{R_c(d')}$ denotes the restriction of ${{\mathbf {Hom}}(f, c)}:{\mathbf {Hom}}(d', c)\rightarrow{\mathbf {Hom}}(d, c)$ to ${R_c(d')}\subset {\mathbf {Hom}}(d', c).$ 
\subsection{Presheaves of categories}
As before, let $\mathcal C$ be a category of a collection of (small) categories, and $\rmc$ be the category of all small categories. Existence of the functor $${\mathcal F}_{\mathbf U}:{\mathcal C}^{\rm op}\to\rmc,$$ for each $\mathbf U\in \Obj(\mathcal C)$ [Proposition~\ref{Pr:Hompresheaf}], motivates following definition of  \textit{presheaves of categories}.

Let $\mathcal C$ be a category of a collection of (small) categories, and $\rmc$ be the category of all small categories. A   \textit{presheaf of categories} (or, a $\mathbf {Cat}$-\textit{valued presheaf}), over the category $\mathcal C$,  is a functor 
\begin{equation}\label{E:prshfcat}
	{\mathcal R}:{\mathcal C}^{\rm op}\to \rmc.
\end{equation}	
It is immediate from the definition above and Proposition~\ref{Pr:Hompresheaf} that:
\begin{cor}\label{cor:funcprsf}
	For each $\mathbf U\in \Obj(\mathcal C),$ the functor $${\mathcal F}_{\mathbf U}:{\mathcal C}^{\rm op}\to\rmc,$$
	in Proposition~\ref{Pr:Hompresheaf} is a presheaf of categories, over the category $\mathcal C.$
\end{cor}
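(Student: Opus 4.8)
The plan is to observe that Corollary~\ref{cor:funcprsf} follows by simply unwinding the two definitions involved. By Proposition~\ref{Pr:Hompresheaf}, for each $\mathbf U\in \Obj(\mathcal C)$ the assignment ${\mathcal F}_{\mathbf U}$ is a contravariant functor ${\mathcal C}^{\rm op}\to\rmc$; on the other hand, the definition of a $\mathbf{Cat}$-valued presheaf (presheaf of categories) over $\mathcal C$ in \eqref{E:prshfcat} is precisely \emph{a} functor ${\mathcal R}:{\mathcal C}^{\rm op}\to\rmc$. So the only thing to check is that the two notions of ``functor ${\mathcal C}^{\rm op}\to\rmc$'' coincide, which is immediate; there is no genuine content beyond matching the words.

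Concretely, I would carry out the argument in one short step: fix $\mathbf U\in\Obj(\mathcal C)$, invoke Proposition~\ref{Pr:Hompresheaf} to get the contravariant functor ${\mathcal F}_{\mathbf U}:{\mathcal C}^{\rm op}\to\rmc$ with $\mathbf V\mapsto{\mathcal F}(\mathbf V,\mathbf U)$ on objects and $\mbl\mapsto{\mathcal F}_{\mathbf U}(\mbl)$ on morphisms (as given in \eqref{D:objfunc} and \eqref{E:defnat}), and then note that this is exactly an instance of the functor ${\mathcal R}:{\mathcal C}^{\rm op}\to\rmc$ demanded by definition \eqref{E:prshfcat}. Hence ${\mathcal F}_{\mathbf U}$ is a presheaf of categories over $\mathcal C$.

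There is essentially no obstacle here: the corollary is a ``definitional'' consequence, flagged as such by the phrase ``It is immediate from the definition above and Proposition~\ref{Pr:Hompresheaf}'' preceding the statement. The only thing one might want to remark on — and which I would include for completeness — is that $\rmc$ is the category of \emph{all} small categories while the codomain bookkeeping in Proposition~\ref{Pr:Hompresheaf} already lands there, so no size issue intervenes and nothing further needs to be verified. Accordingly the proof is a single sentence identifying ${\mathcal F}_{\mathbf U}$ with the functor in \eqref{E:prshfcat}.
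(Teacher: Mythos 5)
Your proof is correct and matches the paper exactly: the paper offers no argument beyond the remark that the corollary ``is immediate from the definition above and Proposition~\ref{Pr:Hompresheaf},'' which is precisely the definitional unwinding you carry out. Nothing further is needed.
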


Let ${{\mathbf {Prsh}}}(\mathcal C, {\mathbf {Cat}}):={\mathcal F}({\mathcal C}^{\rm op}, {\mathbf {Cat}})$ denote the category of $\mathbf {Cat}$-valued presheaves, over the category ${\mathcal C};$ that is,
\begin{equation}\label{E:catprsheaf}
	\begin{split}
		&\Obj\biggl({{\mathbf {Prsh}}}(\mathcal C, {\mathbf {Cat}})\biggr)={\rm Fun}({\mathcal C}^{\rm op}, {\mathbf {Cat}}),\\
  &\Mor\biggl({{\mathbf {Prsh}}}(\mathcal C, {\mathbf {Cat}})\biggr)={\mathcal N}({\mathcal C}^{\rm op}, {\mathbf {Cat}}).
\end{split}
\end{equation}

Then by Corollary~\ref{cor:funcprsf} we have a functor, from the category $\mathcal C$ to the category ${{\mathbf {Prsh}}}(\mathcal C, {\mathbf {Cat}}),$ given by
\begin{eqnarray}
	&&\mathcal C\longrightarrow{{\mathbf {Prsh}}}(\mathcal C, {\mathbf {Cat}}),\nonumber\\
	&\Obj\biggl(\mathcal C\biggr)&\longrightarrow\Obj\biggl({{\mathbf {Prsh}}}(\mathcal C, {\mathbf {Cat}})\biggr)\nonumber\\
 &&{\mathbf U}\mapsto {\mathcal F}_{\mathbf U},\label{E:objemb}\\
 &\Mor\biggl(\mathcal C\biggr)&\longrightarrow\Mor\biggl({{\mathbf {Prsh}}}(\mathcal C, {\mathbf {Cat}})\biggr),\nonumber\\
 &\biggl({\mathbf U}\xrightarrow{\mbl}{\mathbf V}\biggr)&\mapsto \biggl({\overline {\mathcal F}}(\mbl):{\mathcal F}_{\mathbf U}\Longrightarrow {\mathcal F}_{\mathbf V}\biggr), [\hbox{given by} \eqref{E:yonmap}].\label{E:moremb}
\end{eqnarray}

But according to Proposition~\ref{Pr:yoneda}, the above functor  $\mathcal C \longrightarrow {{\mathbf {Prsh}}}(\mathcal C, {\mathbf {Cat}})$ is full and faithful. This allows us to identify  $\mathcal C$  as a full subcategory of
	${{\mathbf {Prsh}}}(\mathcal C, {\mathbf {Cat}}).$
\begin{theorem}\label{Th:yonembed}
	Let $\mathcal C$ be a category of a collection of (small) categories, and $\rmc$ be the category of all small categories. Let ${{\mathbf {Prsh}}}(\mathcal C, {\mathbf {Cat}}):={\mathcal F}({\mathcal C}^{\rm op}, {\mathbf {Cat}})$ be the  category of $\mathbf {Cat}$-valued presheaves over the category ${\mathcal C}.$ Then there exists a full and faithful functor 
	$$\mathcal C\longrightarrow{{\mathbf {Prsh}}}(\mathcal C, {\mathbf {Cat}}),$$
	given by \eqref{E:objemb}--\eqref{E:moremb}. In other words,  $\mathcal C$ can be identified as a full subcategory of
	${{\mathbf {Prsh}}}(\mathcal C, {\mathbf {Cat}}).$
\end{theorem}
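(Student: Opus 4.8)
The plan is to assemble the theorem directly from the ingredients already built in Section~2, so that almost no new work is required. First I would verify that the assignment in \eqref{E:objemb}--\eqref{E:moremb} is genuinely a functor $\Phi:\mathcal C\to {\mathbf {Prsh}}(\mathcal C,{\mathbf {Cat}})$. On objects it sends ${\mathbf U}\mapsto {\mathcal F}_{\mathbf U}$, which lies in $\Obj\bigl({\mathbf {Prsh}}(\mathcal C,{\mathbf {Cat}})\bigr)={\rm Fun}({\mathcal C}^{\rm op},{\mathbf {Cat}})$ by Proposition~\ref{Pr:Hompresheaf} (equivalently Corollary~\ref{cor:funcprsf}). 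On morphisms it sends $\bigl({\mathbf U}\xrightarrow{\mbl}{\mathbf V}\bigr)\mapsto {\overline{\mathcal F}}(\mbl)$, which by Lemma~\ref{L:commu} is indeed a natural transformation ${\mathcal F}_{\mathbf U}\Longrightarrow{\mathcal F}_{\mathbf V}$, hence a morphism in the presheaf category. So the two maps of \eqref{E:morobjmbymbf}-type land in the right places; what remains for functoriality is to check that $\Phi$ preserves identities and composition, i.e. ${\overline{\mathcal F}}({\mathbf {Id}}_{\mathbf U})={\mathbf {Id}}_{{\mathcal F}_{\mathbf U}}$ and ${\overline{\mathcal F}}(\mbl'\mbl)={\overline{\mathcal F}}(\mbl')\circ{\overline{\mathcal F}}(\mbl)$.

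Next I would dispatch these functoriality checks componentwise, since a natural transformation of presheaves is equal to another precisely when all its components agree. For a fixed ${\mathbf W}\in\Obj(\mathcal C)$, the component $\bigl({\overline{\mathcal F}}(\mbl)\bigr)({\mathbf W})={\overline{\mathcal F}}_{\mathbf W}(\mbl)$ acts on an object $\mbeta\in{\rm Fun}({\mathbf W},{\mathbf U})$ by $\mbeta\mapsto\mbl\mbeta$ and on a morphism $\mathcal S\in{\mathcal N}({\mathbf W},{\mathbf U})$ by $\mathcal S\mapsto\mbl{\mathcal S}$, by \eqref{D:objfunccov} and \eqref{E:defnatcov}. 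Taking $\mbl={\mathbf {Id}}_{\mathbf U}$ gives $\mbeta\mapsto\mbeta$ and $\mathcal S\mapsto\mathcal S$, so ${\overline{\mathcal F}}_{\mathbf W}({\mathbf {Id}}_{\mathbf U})$ is the identity functor on ${\mathcal F}({\mathbf W},{\mathbf U})$; as this holds for every ${\mathbf W}$, ${\overline{\mathcal F}}({\mathbf {Id}}_{\mathbf U})$ is the identity natural transformation. For composition, given ${\mathbf U}\xrightarrow{\mbl}{\mathbf V}\xrightarrow{\mbl'}{\mathbf W'}$ (here I pick a fresh symbol to avoid clashing with the running ${\mathbf W}$), one computes on components that $\mbeta\mapsto(\mbl'\mbl)\mbeta=\mbl'(\mbl\mbeta)$ and $\mathcal S\mapsto(\mbl'\mbl)\mathcal S=\mbl'(\mbl\mathcal S)$, which is exactly the composite ${\overline{\mathcal F}}_{\mathbf W}(\mbl')\circ{\overline{\mathcal F}}_{\mathbf W}(\mbl)$; associativity of functor composition and of whiskering natural transformations is all that is used. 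Hence $\Phi$ is a functor.

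Finally, fullness and faithfulness follow immediately from Proposition~\ref{Pr:yoneda}. On the hom-set ${\rm Hom}_{\mathcal C}({\mathbf U},{\mathbf V})={\rm Fun}({\mathbf U},{\mathbf V})$, the action of $\Phi$ is precisely the set map ${\overline{\mathcal F}}:{\rm Fun}({\mathbf U},{\mathbf V})\to{\rm Nat}({\mathcal F}_{\mathbf U},{\mathcal F}_{\mathbf V})$ of \eqref{E:yonmap}, and ${\rm Nat}({\mathcal F}_{\mathbf U},{\mathcal F}_{\mathbf V})$ is exactly ${\rm Hom}_{{\mathbf {Prsh}}(\mathcal C,{\mathbf {Cat}})}(\Phi{\mathbf U},\Phi{\mathbf V})$. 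Proposition~\ref{Pr:yoneda} asserts that ${\overline{\mathcal F}}$ is a bijection, so $\Phi$ induces a bijection on every hom-set, i.e. it is full and faithful. A full and faithful functor identifies its source with the full subcategory of the target spanned by the image objects $\{{\mathcal F}_{\mathbf U}:{\mathbf U}\in\Obj(\mathcal C)\}$, giving the stated embedding of $\mathcal C$ into ${\mathbf {Prsh}}(\mathcal C,{\mathbf {Cat}})$.

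I do not expect a genuine obstacle here: every nontrivial assertion — that ${\mathcal F}_{\mathbf U}$ is a presheaf, that ${\overline{\mathcal F}}(\mbl)$ is natural, and that ${\overline{\mathcal F}}$ is bijective — has already been proved (Proposition~\ref{Pr:Hompresheaf}, Lemma~\ref{L:commu}, Proposition~\ref{Pr:yoneda}). The only thing left to write is the functoriality of $\Phi$, and even that reduces to the formulas $\mbeta\mapsto\mbl\mbeta$, $\mathcal S\mapsto\mbl\mathcal S$ being strictly associative and unital. If anything needs care, it is purely bookkeeping: keeping the two roles of ``$\mathbf W$'' (the evaluation object versus the codomain of a composable pair) notationally separate, and remembering that equality of natural transformations is checked component by component. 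So the proof is essentially a citation-assembly of the section's results plus a one-line unit/associativity verification.
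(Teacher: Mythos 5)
Your proposal is correct and takes essentially the same route as the paper: the object and morphism assignments are justified by Corollary~\ref{cor:funcprsf} and Lemma~\ref{L:commu} respectively, and fullness and faithfulness are read off from the bijection of Proposition~\ref{Pr:yoneda}. The only difference is that you explicitly verify unitality and associativity of $\mbl\mapsto{\overline{\mathcal F}}(\mbl)$ componentwise, a functoriality check the paper leaves implicit.
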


Instead of working with the entire category $\mathbf {Cat},$ one can consider a  presheaf of categories with some additional structures. For example, one may define a presheaf of \textit{categorical groups} [3, 10, 14, 18], over $\mathcal C,$ to be a contravariant functor from $\mathcal C$ to $\mathbf {CatGrp}:$ 
$${\mathcal R}: {\mathcal C}^{\rm op}\to \mathbf {CatGrp},$$
where $\mathbf {CatGrp}$ is the category of  categorical groups. We will denote \textit{category of presheaves of categorical groups} by $${{\mathbf {Prsh}}}(\mathcal C, {\mathbf {CatGrp}}).$$ In [13] we  construct such an example of presheaf of categorical groups. In this paper our main objective is to introduce the notion of sieves of categories. So we will work with the definition of presheaf of categories given in \eqref{E:prshfcat}.

Let $\mathcal C$ be a category of a collection of (small) categories, and $\rmc$ be the category of all small categories. Let $\mathbf U\in \Obj(\mathcal C)$. A \textit{sieve of categories} (or, a $\mathbf {Cat}$-\textit{valued sieve}) on $\mathbf U$ is a presheaf of categories
$${\mathcal R}_{\mathbf U}: {\mathcal C}^{op}\to \mathbf {Cat},$$
such that ${\mathcal R}_{\mathbf U}$ is a subfunctor of ${\mathcal F}_{\mathbf U}.$ That is, for any $\mathbf V\in \Obj(\mathcal C),$  and $\biggl({\mathbf V}\xrightarrow{\mbl}{\mathbf V'}\biggr)\in \Mor(\mathcal C),$ we should have
\begin{eqnarray}
&&{\mathcal R}_{\mathbf U}(\mathbf V) \hskip 0.2 cm \hbox {a subcategory of} \hskip 0.2 cm {\mathcal F}(\mathbf V, {\mathbf U});  {\mathcal R}_{\mathbf U}(\mathbf V)\subset {\mathcal F}(\mathbf V, {\mathbf U}),\label{D:sieveobj}\\
&&\biggl({\mathcal R}_{\mathbf U}(\mathbf V ')\xrightarrow{{\mathcal R}_{\mathbf U}(\mbl)}{\mathcal R}_{\mathbf U}(\mathbf V )\biggr)={\mathcal F}_{\mathbf U}(\mbl)|_{{\mathcal R}_{\mathbf U}(\mathbf V')},\label{D:sievemor}
\end{eqnarray}
where ${\mathcal F}_{\mathbf U}(\mbl)|_{{\mathcal R}_{\mathbf U}(\mathbf V')}$ denotes the restriction of the functor ${{\mathcal F}_{\mathbf U}(\mbl)}:{\mathcal F}(\mathbf V', \mathbf U)\rightarrow{\mathcal F}(\mathbf V, \mathbf U)$ to the subcategory ${{\mathcal R}_{\mathbf U}(\mathbf V')}\subset {\mathcal F}(\mathbf V', \mathbf U).$

Equivalently one can think of a ${\mathbf {Cat}}$-valued sieve ${\mathcal R}_{\mathbf U}$ as a collection of morphisms (functors) with common codomain $\mathbf U$ in category $\mathcal C;$ that is, a collection of functors from objects (categories) in $\mathcal C$ to $\mathbf U,$ and a collection of natural transformations between these functors, such that following conditions are satisfied:
\begin{enumerate}
\item{If the functor ${\mathbf V}\xrightarrow{\mathbf \Theta}{\mathbf U}$ is in ${\mathcal R}_{\mathbf U},$ then for any functor ${\mathbf V'}\xrightarrow{\mathbf \Psi}{\mathbf V}\in \Mor(\mathcal C),$   ${\mathbf V'}\xrightarrow{\mathbf \Theta{\mathbf \Psi}}{\mathbf U}$ is also in   ${\mathcal R}_{\mathbf U}.$  }
\item{If the natural transformation ${\mathcal S }\colon \mathbf \Theta_1\Longrightarrow{\mathbf \Theta_2}$ is in ${\mathcal R}_{\mathbf U},$ then for any functor ${\mathbf V'}\xrightarrow{\mathbf \Psi}{\mathbf V}\in \Mor(\mathcal C),$   ${{\mathcal S}\mathbf \Psi}\colon{\mathbf \Theta_1}{\mathbf \Psi}\Longrightarrow{\mathbf \Theta_2}\mathbf \Psi$ is also in   ${\mathcal R}_{\mathbf U}.$}
\end{enumerate}
\begin{exmp}\label{Ex:trivsie}
\rm{
We call a category $\mathbf A $ \textit{trivially discrete}, if objects form a set,  and only morphisms in $\Mor(\mathbf A)$ are identity morphisms:
$$\Mor(\mathbf A)=\{1_{a}| a\in \Obj(\mathbf A)\}\simeq \Obj(\mathbf A).$$
Suppose ${\mathcal C}_{\rm dis}$ is a category, whose objects are  trivially discrete categories. Then a $\mathbf {Cat}$-valued sieve over ${\mathcal C}_{\rm dis}$ is simply a sieve in the traditional sense.\fbox{}}
\end{exmp}

\begin{exmp}\label{Ex:sie}
\rm{
	Let us again consider the case  $\mathcal C={\mathcal O}(\mathbf B).$ We refer to Example~\ref{Ex:yon} for the description of the functors ${\mathcal F}_{\mathbf U}.$ Let $\mathbf U\in \Obj\biggl({\mathcal O}(\mathbf B)\biggr),$  and ${\mathcal R}_{\mathbf U}$ be the \textit{selection} of 
	\begin{enumerate}
	\item{subcategories of $\mathbf U,$ which are stable under  inclusion functors; that is, if ${\mathcal R}_{\mathbf U}$  selects ${\mathbf V}_k, \cdots, {\mathbf V}_1\subset {\mathbf U},$ then
\begin{eqnarray}
	&&\mathbf V_k\subset {\mathbf V}_{k-1}\subset \cdots\cdots \subset{\mathbf V}_2\subset{\mathbf V}_1\subset {\mathbf V}_0=\mathbf U,\nonumber\\
	&&\mathbf V_k\stackrel{{\mathbf i}_k}{\hookrightarrow}{\mathbf V}_{k-1}\stackrel{{\mathbf i}_{k-1}}{\hookrightarrow}\cdots\cdots \stackrel{{\mathbf i}_3}{\hookrightarrow}{\mathbf V}_2\stackrel{{\mathbf i}_2}{\hookrightarrow}{\mathbf V}_1\stackrel{{\mathbf i}_1}{\hookrightarrow}{\mathbf V}_0=\mathbf U,\nonumber
			\end{eqnarray}
}
\item{ a subcategory ${\mathbf C}_{j}$ of  ${\mathcal F}({\mathbf V}_j, {\mathbf U}),$ for each $j, 0\leq j\leq k;$  (we set ${\mathbf i}_0={\mathbf {Id}}_{\mathbf U}$) that is, a choice of  a submonoid ${\mathcal M}_{j}$ of ${\rm Nat}({\mathbf {i}}_0\circ\cdots\circ{\mathbf {i}}_j, {\mathbf {i}}_0\circ\cdots\circ{\mathbf {i}}_j)$ [see Example~\ref{Ex:yon}  for an explicit description of these natural transformations]:
\begin{equation}\nonumber
\begin{split}
&\Obj({\mathbf C}_j)=\{ {\mathbf {i}}_0\circ\cdots\circ{\mathbf {i}}_j \}\\
&\Mor({\mathbf C}_{j})={\mathcal M}_{j}\subset{\rm Nat} ({\mathbf {i}}_0\circ\cdots\circ{\mathbf {i}}_j , {\mathbf {i}}_0\circ\cdots\circ{\mathbf {i}}_j )             ,\qquad 0\leq j\leq k
\end{split}
\end{equation}
such that
$${\mathcal M}_j\subset {{\mathbf i}_{j}^*}{\mathcal M}_{j-1},\qquad 1\leq j\leq k,$$
where ${{\mathbf i}_{j}^*}{\mathcal M}_{j-1}:=\{\mathbf S{\mathbf i}_j|\mathbf S\in {\mathcal M}_{j-1}\}.$

}
\end{enumerate}
Then ${\mathcal R}_{\mathbf U}$ defines a $\mathbf {Cat}$-valued sieve over $\mathbf U\in \Obj\biggl({\mathcal O}(\mathbf B)\biggr)$ as follows. 
\begin{equation}\nonumber
\begin{split}
&{\mathcal R}_{\mathbf U}({\mathbf V}_j)={\mathbf C}_{j},\\
&\biggl({\mathcal R}_{\mathbf U}({\mathbf i}_{s+1}\circ \cdots\circ{\mathbf i}_r)\biggr)(\mathbf S)={\mathbf S}{\mathbf i}_{s+1}\circ \cdots\circ{\mathbf i}_r\in {\rm Mor}({\mathbf C}_r),
\end{split}
\end{equation}
where $r\geq s,$ hence ${{\mathbf i}_{s+1}\circ \cdots\circ{\mathbf i}_r}\colon{\mathbf V}_r{\hookrightarrow}{\mathbf V}_s,$ and ${\mathbf S}\in \Mor({\mathbf C}_s).$

In fact, every $\mathbf {Cat}$-valued sieve over ${\mathcal O}(\mathbf B)$ arises in this fashion.\fbox{}
}
\end{exmp}
\section*{Concluding remarks}
In this paper we have developed a framework of  ${\mathbf {Cat}}$-valued presheaves and sieves  over a category of (small) categories $\mathcal C.$   In [13] we have used some of the constructions and results of this paper to explore ${\mathbf {Cat}}$-valued sheaves over the category ${\widetilde {\mathcal O}}(\mathbf B)$ of ``open subcategories'' of a  topological groupoid  ${\mathbf B}.$

Natural direction of enquiry following ${\mathbf {Cat}}$-valued sieves should be  towards
{Grothendieck topologies} on a topological category. A paper (jointly with, A Lahiri, A N Sengupta) is under preparation on this topic.

Lastly, we are obliged to point out that  ${\mathbf {Cat}}$ is a $2$-category [18], and so is  ${\mathcal C}$ or ${\widetilde {\mathcal O}}(\mathbf B)$.  It would have  done justice to the natural higher structures involved, if we had also incorporated them into our framework and defined the ${\mathbf {Cat}}$-valued presheaf to be a $2$-functor
$${\mathcal C}^{\rm op}\longrightarrow {\mathbf {Cat}}.$$
 For the sake of simplicity, we have treated  ${\mathbf {Cat}}, {\mathcal C}, {\widetilde {\mathcal O}}(\mathbf B)$ as $1$-categories, completely ignoring the higher structures. However, without much difficulty the framework in this paper  extends to higher morphisms as well .

\vskip .3 cm

 {\bf{ Acknowledgments.} } \textit{The author gratefully acknowledges   suggestions received  from Ambar N Sengupta and Amitabha Lahiri. The author would like to thank International Centre for Theoretical Sciences, TIFR, Bangalore for their kind hospitality, where part of this paper was written.}

\end{document}